\numberwithin{equation}{section}
\newtheorem{thm}{Theorem}
\newtheorem{lemma}[thm]{Lemma}
\newtheorem{proposition}[thm]{Proposition}
\newtheorem{rmk}[thm]{Remark}
\newtheorem*{theorem*}{Theorem}
\newcommand{\R}{\mathbb{R}}
\newcommand{\Z}{\mathbb{Z}}
\newcommand{\N}{\mathbb{N}}
\newcommand{\T}{\mathbb{T}}
\title[Mapping properties of the Littlewood-Paley square function]{Endpoint Mapping properties of the Littlewood-Paley square function}
\author{Odysseas Bakas}
\address{Room 4606, James Clerk Maxwell Building, University of Edinburgh, Peter Guthrie Tait Road, Edinburgh, EH9 3FD.}
\email{o.bakas@sms.ed.ac.uk}
\date{}
\begin{document}

\begin{abstract} In this note we give an alternative proof of a theorem due to Bourgain \cite{Bourgain} concerning the growth of the constant in the Littlewood-Paley inequality on $\T$ as $p \rightarrow 1^+$. Our argument is based on the endpoint mapping properties of Marcinkiewicz multiplier operators, obtained by Tao and Wright in \cite{TW}, and on Tao's converse extrapolation theorem \cite{Tao}. Our method also establishes the growth of the constant in the Littlewood-Paley inequality on $\T^n$ as $p \rightarrow 1^+$. Furthermore, we obtain sharp weak-type inequalities for the  Littlewood-Paley square function on $\T^n$, but when $n \geq 2$ the weak-type endpoint estimate on the product Hardy space over the $n$-torus fails, contrary to what happens when $n=1$.
\end{abstract}

\maketitle

\section{Introduction}
If $f$ is a trigonometric polynomial on the torus $\T$, then the Littlewood-Paley square function $S(f)$ of $f$ is given by
$$ S(f) (x) = \Big( \sum_{k \in \Z} |\Delta_k (f)(x)|^2  \Big)^{1/2},$$
where for $k\in \N$ one defines
$$ \Delta_k (f)(x) = \sum_{n=2^{k - 1}}^{2^k -1} \widehat{f} (n) e^{ i 2 \pi n x} \ \mathrm{and}\  \Delta_{-k} (f) (x) =  \sum_{n=-2^k +1 }^{-2^{k-1}} \widehat{f} (n) e^{ i 2 \pi n x}   $$
and $\Delta_0 (f) (x) = \widehat{f}(0) $ for $x \in \T$. 

The square function $S$ can be extended as a bounded operator on $L^p (\T)$ for all $1<p< \infty$, namely for each $1<p<\infty$ there is a constant $C(p)$ so that
\begin{equation}\label{L-P}
\| S (f) \|_{L^p (\T)} \leq C(p) \| f \|_{L^p(\T)}.
\end{equation}
In \cite[Theorem 1]{Bourgain}, Bourgain determined the behaviour of $C(p)$ in (\ref{L-P}) as $p \rightarrow 1^+$. In particular, he showed that there exist absolute constants $c_1, c_2 > 0$ such that
\begin{equation}\label{estimate}
c_1 (p-1)^{-3/2} < C(p) < c_2 (p-1)^{-3/2}
\end{equation}
for every $1 < p \leq 2$.

In section \ref{alternative} we give a simple proof of the upper estimate in (\ref{estimate}) based on results of Tao and Wright \cite{TW} and Tao \cite{Tao}. More precisely, using the observation that Marcinkiewicz multipliers locally map $L \log^{1/2} L $ to $L^{1,\infty}$ \cite[Theorem 1.2]{TW}, together with interpolation and Tao's converse extrapolation \cite{Tao}, one deduces that $ \|  \sum_{k \in \Z  } \pm \Delta_k \|_{L^p (\T) \rightarrow L^p (\T)} \lesssim (p-1)^{-3/2} $, which is essentially the upper estimate in (\ref{estimate}). Furthermore, we extend (\ref{estimate}) to higher dimensions. Indeed, by using  $ \|  \sum_{k \in \Z } \pm \Delta_k \|_{L^p (\T) \rightarrow L^p (\T)} \lesssim (p-1)^{-3/2} $ and iteration, we obtain higher-dimensional extensions of (\ref{estimate}) in section \ref{extension}. In section \ref{sharp_w-t} we prove sharp weak-type inequalities for the multi-parameter Littlewood-Paley square function on $\T^n$ and in section \ref{sharp_rough} we establish the corresponding weak-type endpoint results on $\R^n$. It is well-known that the Littlewood-Paley square function maps $H^1 (\T)$ to $L^{1, \infty}(\T)$. Motivated by this fact, a natural question is whether the two-parameter Littlewood-Paley square function maps the product real Hardy space $H^1 (\T \times \T)$ to $L^{1, \infty} (\T^2)$. In section \ref{negative_results} we show that this is not the case.

\subsection*{Acknowledgement} The author wishes to warmly thank and acknowledge his PhD supervisor, Professor Jim Wright, for all his help, guidance and for his useful comments that improved the presentation of this paper.
\section{Notation}
\subsection{Notation and useful facts} 
If $X$ and $Y$ are positive quantities, the notation $X \lesssim Y$ (or $Y \gtrsim X$) means that there is a positive constant $C>0$ such that $X \leq C Y$. If $X \lesssim Y$ and $Y \lesssim X$, we write $X \sim Y$.

Let $(X,\mu)$ be a measure space and $r>0$. We set
$$ \| f \|_{L \log^r L (X)} = \int_0^{\infty} f^{\ast} (t) \log^r(1/t) dt ,$$
 where $f^{\ast} (t) = \inf \big\{ \lambda > 0 : \mu(\{ x \in X : |f(x)| >\lambda \} ) \leq t \big\} $ is the decreasing rearrangement of $f$ defined on $[0,\infty)$. If $\mu(X) < \infty$, then $\int_X |f(x)| \log^r(1+ |f(x)|) d \mu(x) \lesssim 1  +\| f \|_{L \log^r L (X)}$ and $\| f \|_{L \log^r L (X)} \lesssim 1 +\int_X |f(x)| \log^r(1+ |f(x)|) d \mu(x)$. See, e.g., \cite{BS}.

In the present note, we identify functions on $\T$ with functions on $[0,1)$.

If $K_n$ denotes the Fej\'{e}r kernel on $\T$ of order $n$, then $V_n = 2 K_{2n+1} - K_n$ is the de la Vall\'{e}e Poussin kernel of order $n$. Since $\| K_n \|_{L^1 (\T)} =1$ and $\| K_n \|_{L^{\infty} (\T)} \lesssim  n$ for every $n \in \N$, we deduce that $\int_{\T} |V_n (x)|  \log^r (1 + |V_n(x)|) dx \lesssim \log^r n$ for $r >0$. Moreover, one has $\widehat{V_n} (j) =1$ for all $|j| \leq n+1$ and it thus follows 
 that $\| \Delta_k (V_{2^N}) \|_{L^1 (\T)} \gtrsim k $
for each $k \in \N$ with $k \leq N$. 

Let $(X, \mu)$ be a given measure space. One has (see page 485 in \cite{RDF})
\begin{equation}\label{weak_sup}
 \| g \|_{L^{1,\infty}(X)} \sim \sup_{\substack{E \subset X :\\ 0 < \mu(E) <\infty}} \mu(E)^{-1} \| g \|_{L^{1/2}(E)}.
\end{equation}

\subsection{Hardy spaces} We define the real Hardy space $H^1 (\R)$ to be the space of all integrable functions whose Hilbert transforms are also integrable.

The product real Hardy space $H^1 (\R \times \R)$ is the set of all functions $f \in L^1 (\R^2)$ such that $H_1 (f), H_2 (f) , H_1 \otimes H_2 (f) \in L^1 (\R^2)$, where $H_i$ denotes the Hilbert transform with respect to the $i$-th variable.

 Let $R = I \times J $ be a dyadic rectangle in $\R^2$. Following \cite{CF}, we say that a function $a_R$ is a rectangle atom associated to $R$, if $a_R$ is supported in $R$, $\| a_R \|_{L^2 (\R^2)} \leq |R|^{-1/2}$, $\int_I a_R (x',y) dx'=0$ for every $y \in J$ and $\int_J a_R (x,y') dy' =0$ for every $x \in I$. We define $H^1_{\mathrm{rect}} (\R \times \R)$ to be the space spanned by the class of all rectangle atoms, namely $ H^1_{\mathrm{rect}} (\R \times \R) =\big\{ \sum_{R} \lambda_R a_R : a_R\ \mathrm{is} \ \mathrm{a}\ \mathrm{rectangle}\ \mathrm{atom} \ \mathrm{and}\ \sum_R |\lambda_R| < \infty \big\} $. A counterexample of Carleson \cite{Carleson} shows that $H^1_{\mathrm{rect}} (\R \times \R)$ is a proper subspace of $H^1 (\R \times \R)$.

Similarly, by using the periodic Hilbert transform, one defines the real Hardy space $H^1 (\T)$ and the product real Hardy space $H^1 (\T \times \T)$. One defines rectangle atoms $a_R$ associated to dyadic rectangles $R \subset \T^2$ as in the euclidean case. In the periodic setting, one also needs to consider constant functions on $\T^2$ and ``essentially one-dimensional atoms'', that is functions defined on $\T^2$ that are constant in one variable $a(x,y) = a_I(x) $ (or $a(x,y)=a_I (y)$), $a_I$ is supported in a dyadic interval $I \subset \T$, has mean zero and $\| a_I \|_{L^2 (\T)} \leq |I|^{-1/2} $. We define $H^1_{\mathrm{rect}} (\T \times \T)$ to be the space spanned by rectangle atoms associated to dyadic rectangles in $\T^2$, constant functions on $\T^2$ and ``essentially one-dimensional atoms'' on $\T^2$. Furthermore, $H^1_{\mathrm{rect}} (\T \times \T)$ is a proper subspace of $H^1 (\T \times \T)$.

For more details on Hardy spaces, see the survey articles \cite{CF} and \cite{CW}.

\section{A new proof of the upper estimate in (\ref{estimate})}\label{alternative}
In \cite{TW}, Tao and Wright proved that if $T_m$ is a Marcinkiewicz multiplier operator acting on functions defined over $\R$ (namely the corresponding symbol $m$ of $T_m$ is a bounded function on $\R$ and $m$ is of uniform bounded variation over intervals of the form $\pm [2^k, 2^{k+1})$, $k \in \Z$), then it locally maps $L \log^{1/2} L$ to $L^{1, \infty}$. In particular, for every compact set $K \subset \R$ there is a constant $C > 0$, depending on $K$ and $\| m \|_{L^{\infty}(\R)} + \sup_{k \in \Z} \int_{\pm [2^k, 2^{k+1})} |dm|$, such that
\begin{equation}\label{weak-type}
\| T_m (f) \|_{L^{1,\infty} (K)} \leq C \| f \|_{L \log^{1/2}L (K)}
\end{equation} 
for all measurable functions $f$ supported in $K$.

By adapting the proof of Tao and Wright for functions defined on the torus, one can show that for every $\omega \in [0,1]$ the prototypical Marcinkiewicz multiplier operator
$$ T_{\omega} = \sum_{k \in \Z  } r_k (\omega) \Delta_k$$ 
acting on functions defined over $\T$ maps $L \log^{1/2} L (\T)$ to $L^{1, \infty} (\T)$, where $(r_k)_{k \in \Z }$ denotes the set of Rademacher functions indexed by $\Z $. In particular, one has
\begin{equation}\label{w-t}
\| T_{\omega} (f) \|_{L^{1, \infty} (\T)} \leq C \| f \|_{L \log^{1/2}L (\T)},
\end{equation}
where $C>0$ is an absolute constant independent of $\omega$.

Using (\ref{w-t}) and the fact that $T_{\omega}$ is bounded on $L^2 (\T)$ with operator norm equal to $1$, one can easily show, by using  Marcinkiewicz-type interpolation, that $T_{\omega}$ is bounded from $L \log^{3/2} L (\T)$ to $L^1 (\T)$. In particular, we obtain
$$ \| T_{\omega} (f) \|_{L^1 (\T)} \lesssim \| f \|_{L \log^{3/2}L (\T)}, $$
where the implied constant is independent of $\omega$. By Tao's converse extrapolation theorem \cite{Tao}, it follows that
\begin{equation}\label{L^p_Est}
\| T_{\omega} \|_{L^p(\T) \rightarrow L^p (\T)} \leq \frac{A}{(p-1)^{3/2}} \ \ (\mathrm{as}\ p \rightarrow 1^+)
\end{equation}
where $A>0$ is a positive constant independent of $\omega$. To complete the proof of the upper estimate in (\ref{estimate}), we use (\ref{L^p_Est}) and Khintchine's inequality. More precisely, let $p >1$ be close to $1$ and let $f$ be a trigonometric polynomial. Then, by Khintchine's inequality, we have for every $x \in \T$
$$ S(f) (x) \lesssim \int_{[0,1]} |T_{\omega} (f)(x)| d \omega,$$
where the implied constant is independent of $x \in \T$ and $f$. Therefore, by integrating over $\T$ and using Minkowski's inequality, we get
\begin{align*}
 \| S(f)\|_{L^p (\T)} \lesssim \Big( \int_{\T} \Big| \int_{[0,1]} |T_{\omega} (f)(x)| d \omega \Big|^p  dx \Big)^{1/p}& \leq \int_{[0,1]} \|  T_{\omega} (f) \|_{L^p (\T)} d \omega \\
&\lesssim  \int_{[0,1]} \frac{1}{(p-1)^{3/2}} \| f \|_{L^p (\T)} d \omega \\
&= \frac{1}{(p-1)^{3/2}} \| f \|_{L^p (\T)},
\end{align*}
which is the upper estimate in (\ref{estimate}).
\section{Higher dimensional extension of (\ref{estimate})}\label{extension}

For $n \in \N$, let $S_n$ denote the $n$-parameter Littlewood-Paley square function on $\T^n$ initially defined over trigonometric polynomials on $\T^n$ by
$$ S_n (f) (x) = \Big( \sum_{k_1, \cdots, k_n \in \Z }  | \Delta_{k_1, \cdots, k_n} (f) (x) |^2 \Big)^{1/2},$$
where $ \Delta_{k_1, \cdots, k_n} =  \Delta_{k_1} \otimes \cdots \otimes \Delta_{ k_n}$. The corresponding $n$-parameter Littlewood-Paley inequality is
\begin{equation}\label{L-P_n}
 \|S_n (f) \|_{L^p (\T^n)} \leq C_p (n)  \|f \|_{L^p (\T^n)}.
\end{equation}

Our goal in this section is to show that $C_p (n) \sim (p-1)^{-3n/2}$. As it is mentioned in the introduction, this can be done quite easily by iteration thanks to the fact that $ \| T_{\omega} \|_{L^p (\T) \rightarrow L^p (\T)} \lesssim (p-1)^{-3/2}$.

\begin{proposition}\label{constants_d}
There exist positive constants $c_1 (n), c_2(n)$, depending only on the dimension $n$, such that
\begin{equation}\label{main_n}
 \frac{c_1(n)}{(p-1)^{3n/2}} <  C_p (n)  < \frac{c_2(n)}{(p-1)^{3n/2}},
\end{equation}
where $C_p (n)$ is the constant in $(\ref{L-P_n})$.
\end{proposition}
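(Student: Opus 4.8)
The plan is to establish the two halves of $(\ref{main_n})$ separately, each by tensorising its one-dimensional counterpart; since $n$ is fixed throughout, we may freely absorb constants depending only on $n$ at every step.

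For the upper bound, given $\omega=(\omega_1,\dots,\omega_n)\in[0,1]^n$ I would work with the tensor-product operator
$$ T_{\omega}=T_{\omega_1}\otimes\cdots\otimes T_{\omega_n}=\sum_{k_1,\dots,k_n\in\Z}\Big(\prod_{i=1}^{n}r_{k_i}(\omega_i)\Big)\Delta_{k_1,\dots,k_n}, $$
where $T_{\omega_i}=\sum_{k\in\Z}r_k(\omega_i)\Delta_k$ acts on the $i$-th variable only. Applying the one-dimensional operators one variable at a time and invoking Fubini's theorem, one obtains
$$ \|T_{\omega}\|_{L^p(\T^n)\to L^p(\T^n)}\leq\prod_{i=1}^{n}\|T_{\omega_i}\|_{L^p(\T)\to L^p(\T)}\lesssim(p-1)^{-3n/2} $$
uniformly in $\omega$, by $(\ref{L^p_Est})$. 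On the other hand, the multi-parameter Khintchine inequality (obtained by applying the one-parameter inequality in each of the $n$ Rademacher variables) gives, for every trigonometric polynomial $f$ on $\T^n$ and every $x\in\T^n$,
$$ S_n(f)(x)\lesssim\int_{[0,1]^n}|T_{\omega}(f)(x)|\,d\omega. $$
Integrating over $\T^n$, applying Minkowski's integral inequality, and then the operator bound above yields
$$ \|S_n(f)\|_{L^p(\T^n)}\lesssim\int_{[0,1]^n}\|T_{\omega}(f)\|_{L^p(\T^n)}\,d\omega\lesssim(p-1)^{-3n/2}\|f\|_{L^p(\T^n)}, $$
which is the upper bound in $(\ref{main_n})$ (and extends to all $f\in L^p(\T^n)$ by density).

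For the lower bound I would tensor Bourgain's extremisers. By the lower estimate in $(\ref{estimate})$ together with the fact that $C(p)$ is the best constant in $(\ref{L-P})$, for each $p\in(1,2]$ there is a nonzero trigonometric polynomial $g_p$ on $\T$ with $\|S(g_p)\|_{L^p(\T)}\geq\tfrac{c_1}{2}(p-1)^{-3/2}\|g_p\|_{L^p(\T)}$. Set $f_p=g_p\otimes\cdots\otimes g_p$ on $\T^n$. Since $\Delta_{k_1,\dots,k_n}=\Delta_{k_1}\otimes\cdots\otimes\Delta_{k_n}$, the sum defining $S_n$ factorises and $S_n(f_p)(x)=\prod_{i=1}^{n}S(g_p)(x_i)$; hence $\|S_n(f_p)\|_{L^p(\T^n)}=\|S(g_p)\|_{L^p(\T)}^n$ and $\|f_p\|_{L^p(\T^n)}=\|g_p\|_{L^p(\T)}^n$, so that
$$ \|S_n(f_p)\|_{L^p(\T^n)}\geq\Big(\frac{c_1}{2}\Big)^n(p-1)^{-3n/2}\|f_p\|_{L^p(\T^n)}. $$
This gives the lower bound in $(\ref{main_n})$ with $c_1(n)=(c_1/2)^n$. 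Alternatively, one could take $f_p$ to be a tensor power of a de la Vall\'{e}e Poussin kernel and use the elementary estimates on $\|\Delta_k(V_{2^N})\|_{L^1(\T)}$ recorded earlier, but tensoring the one-dimensional extremiser is cleaner.

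I do not expect any serious obstacle here: all the required tools — the one-dimensional $L^p$ bound $(\ref{L^p_Est})$ uniform in $\omega$, the lower estimate in $(\ref{estimate})$, and standard tensorisation of operators together with Khintchine's and Minkowski's inequalities — are already in hand. The only point to keep track of is that the $n$-fold products of absolute constants produced by the Fubini and Khintchine steps remain harmless, which they do since $n$ is fixed.
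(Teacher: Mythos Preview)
Your argument is correct and follows essentially the same route as the paper: the upper bound is obtained exactly as in the paper by iterating the one-variable estimate $(\ref{L^p_Est})$ for $T_{\omega_i}$ and then applying multi-parameter Khintchine together with Minkowski, while the lower bound is obtained by tensorising a one-dimensional near-extremiser. The only cosmetic difference is that the paper uses the explicit choice $g_p=V_N$ with $\log N\sim(p-1)^{-1}$ (your stated alternative), whereas you invoke an abstract near-extremiser coming from the lower bound in $(\ref{estimate})$; both yield the same conclusion.
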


\begin{proof} To obtain the upper estimate in (\ref{main_n}), let $\omega_1, \cdots,\omega_n$ be arbitrary numbers in $[0,1]$. Then, by using (\ref{L^p_Est}) and iteration, we deduce that 
$$\| T_{\omega_1} \otimes \cdots \otimes T_{\omega_n} \|_{L^p (\T^n) \rightarrow L^p (\T^n)} \leq \frac{ A^n}{(p-1)^{3n/2}}.$$
As in the one-dimensional case, by using multi-dimensional Khintchine's inequality (see, e.g., Appendix D in \cite{Stein}) and Minkowski's inequality, we obtain
$$  \|S_n (f) \|_{L^p (\T^n)} \leq \frac{c_2(n)}{(p-1)^{3n/2}}  \|f \|_{L^p (\T^n)}, $$
where $c_2 (n)$ is a constant that depends only on $n \in \N$.

To prove the lower estimate, we use the corresponding argument of Bourgain that shows the lower estimate in (\ref{estimate}). As in \cite{Bourgain}, given $p>1$, take $N \in \N$ to be such that $\log N \sim (p-1)^{-1}$ and set $f =V_N $. Since $\| S(V_N)\|_{L^p (\T)} \gtrsim (p-1)^{-3/2}$, we have
$$ \| S_n (V_N \otimes \cdots \otimes V_N ) \|_{L^p (\T^n)} = \| S (V_N) \|_{L^p (\T)} \cdots \| S(V_N) \|_{L^p (\T)} \gtrsim_n (p-1)^{-3n/2}, $$
as desired. \end{proof}

It is worth noting that by adapting the method presented in section \ref{alternative} one can give an alternative proof to the upper estimate in (\ref{main_n}). In particular, one can first study the endpoint mapping properties of $n$-dimensional Marcinkiewicz multiplier operators of the form $T_{\omega_1} \otimes \cdots \otimes T_{\omega_n}$ and then, one can use converse extrapolation to deduce the growth of $C_p (n)$ as $p \rightarrow 1^+$ (see also remark \ref{second_proof}). The advantage of this indirect approach is that it motivates the study of sharp weak-type inequalities for $S_n$, which can be regarded as a rudimentary prototype of general Marcinkiewicz multipliers in higher dimensions. This is a problem interesting in its own right.

\section{Sharp weak-type estimates for the Littlewood-Paley square function on $\T^n$}\label{sharp_w-t}
\subsection{The one-dimensional case}
Assume that for some $r\geq 0$ the Littlewood-Paley square function $S$ satisfies a weak-type inequality of the form
$$  \| S(f)  \|_{L^{1,\infty} (\T)} \leq C \| f \|_{L \log^r L (\T)} $$
over all trigonometric polynomials $f$ on $\T$, where $C>0$ is some absolute constant. We shall prove that necessarily $r \geq 1/2$. For this, note that by using the above inequality and the fact that $S$ is bounded on $L^2 (\T)$, we deduce, by interpolation, that
$$ \|S(f) \|_{L^1 (\T)} \lesssim  \| f \|_{L \log^{r+1} L (\T)} $$
for all trigonometric polynomials $f$ on $\T$. However, if we take $f = V_{2^N}$, then we have $ \| f \|_{L \log^r L (\T)} \lesssim 1+ \int_{\T} |f| \log^{r+1}(1+|f|) \lesssim N^{r+1}$ and, moreover, by Minkowski's inequality,
\begin{align*}
\| S(f) \|_{L^1 (\T)} \geq \Big\|  \Big( \sum^N_{k=1} |\Delta_k (V_{2^N})  |^2 \Big)^{1/2} \Big\|_{L^1 (\T)} \geq \Big( \sum_{k=1}^N \| \Delta_k (V_{2^N}) \|^2_{L^1 (\T)} \Big)^{1/2}  &\gtrsim \big( \sum_{k=1}^N  k^2 \big)^{1/2} \\
&\gtrsim N^{3/2}.
\end{align*}
We thus get  $N^{3/2} \lesssim N^{r+1}$ and hence, by letting $N \rightarrow \infty$, it follows that the best we can expect is $r \geq 1/2$. 

\begin{proposition} The Littlewood-Paley square function $S$ satisfies the weak-type inequality
\begin{equation}\label{w-t_S}
\| S(f) \|_{L^{1,\infty}(\T)} \leq C \| f \|_{L \log^{1/2} L (\T)} 
\end{equation}
for all trigonometric polynomials $f$ on $\T$, where $C>0$ is an absolute constant.
\end{proposition}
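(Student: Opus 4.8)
The plan is to derive (\ref{w-t_S}) from the uniform weak-type estimate (\ref{w-t}) for the prototypical operators $T_{\omega}$, $\omega \in [0,1]$. The naive idea --- combine the pointwise bound $S(f)(x) \lesssim \int_0^1 |T_{\omega}(f)(x)|\, d\omega$ coming from Khintchine's inequality with Minkowski's inequality --- cannot work, because $L^{1,\infty}(\T)$ is not normable and Minkowski's inequality is unavailable there. So instead I would average in $\omega$ inside the restricted $L^{1/2}$ description of $L^{1,\infty}$ recorded in (\ref{weak_sup}), where integration in extra parameters is harmless.

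Concretely, fix a trigonometric polynomial $f$ (so that only finitely many $\Delta_k(f)$ are nonzero and all sums and integrals below are finite). First I would apply the $p = 1/2$ case of Khintchine's inequality pointwise to the sequence $(\Delta_k(f)(x))_{k \in \Z}$, which gives
$$ S(f)(x) = \Big( \sum_{k \in \Z} |\Delta_k(f)(x)|^2 \Big)^{1/2} \lesssim \Big( \int_0^1 |T_{\omega}(f)(x)|^{1/2}\, d\omega \Big)^{2}, $$
hence $S(f)(x)^{1/2} \lesssim \int_0^1 |T_{\omega}(f)(x)|^{1/2}\, d\omega$. Then, for any measurable $E \subset \T$ with $0 < |E| \le 1$, I would integrate this over $E$ and use Fubini (all integrands are nonnegative) to obtain
$$ \int_E S(f)(x)^{1/2}\, dx \lesssim \int_0^1 \Big( \int_E |T_{\omega}(f)(x)|^{1/2}\, dx \Big) d\omega = \int_0^1 \| T_{\omega}(f) \|_{L^{1/2}(E)}^{1/2}\, d\omega . $$

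At this stage (\ref{weak_sup}) gives $\| T_{\omega}(f) \|_{L^{1/2}(E)} \lesssim |E|\, \| T_{\omega}(f) \|_{L^{1,\infty}(\T)}$, and (\ref{w-t}) gives $\| T_{\omega}(f) \|_{L^{1,\infty}(\T)} \lesssim \| f \|_{L \log^{1/2} L(\T)}$ with constant independent of $\omega$. Substituting and integrating the resulting constant bound over $\omega \in [0,1]$ yields
$$ \int_E S(f)(x)^{1/2}\, dx \lesssim |E|^{1/2}\, \| f \|_{L \log^{1/2} L(\T)}^{1/2}, $$
and therefore $|E|^{-1} \| S(f) \|_{L^{1/2}(E)} = |E|^{-1} \big( \int_E S(f)^{1/2} \big)^{2} \lesssim \| f \|_{L \log^{1/2} L(\T)}$ uniformly over all such $E$. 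Taking the supremum over $E$ and invoking (\ref{weak_sup}) once more produces (\ref{w-t_S}).

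There is no real obstacle here beyond identifying the correct framework: the essential point is that the averaging over $\omega$ must be carried out inside the $L^{1/2}$-localization of $L^{1,\infty}$ from (\ref{weak_sup}) (using the $p=1/2$ form of Khintchine's inequality, both its upper and lower halves), since a direct superposition of the weak-type estimates (\ref{w-t}) is illegitimate. Everything else is bookkeeping with nonnegative quantities and Fubini.
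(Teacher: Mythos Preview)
Your proof is correct and follows essentially the same approach as the paper: both arguments pass through the $L^{1/2}$-characterization (\ref{weak_sup}) of $L^{1,\infty}$, apply Khintchine's inequality at $p=1/2$ pointwise, and use Fubini together with (\ref{w-t}). The only cosmetic difference is that the paper, after Fubini, selects a single $\omega'$ (depending on $f$ and $E$) with $\|T_{\omega'}(f)\|_{L^{1/2}(E)} \gtrsim \|S(f)\|_{L^{1/2}(E)}$ by pigeonhole, whereas you keep the full average in $\omega$ and bound it directly; the two arguments are interchangeable.
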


\begin{proof} This follows immediately from the work of Tao and Wright \cite[Theorem 1.2]{TW}. In particular, (\ref{w-t_S}) can be regarded as a vector-valued version of (\ref{w-t}).

 More precisely, to prove (\ref{w-t_S}), let $f$ be a fixed trigonometric polynomial on $\T$. Note that for every measurable subset $E$ of $\T$ with $|E| > 0$, by Khintchine's inequality and Fubini's theorem, there is a choice of $\omega' \in [0,1]$, depending on $f$ and $E$, such that $ \|  T_{\omega'}(f) \|_{L^{1/2}(E)}  \gtrsim  \| S(f) \|_{L^{1/2}(E)} $. Hence, (\ref{w-t_S}) follows from (\ref{weak_sup}) and (\ref{w-t}).
\end{proof}
\subsection{The higher-dimensional case} In this paragraph we extend (\ref{w-t_S}) to higher dimensions, namely we obtain weak-type estimates for the $n$-parameter Littlewood-Paley square function $S_n$. To do this, as in the one-dimensional case, we reduce the problem to the study of the corresponding mapping properties of certain randomised analogues of $S_n$, namely we study first the mapping properties of Marcinkiewicz multiplier operators of the form $T_{\omega_1} \otimes \cdots \otimes T_{\omega_n}$ on $\T^n$, where $\omega_i \in [0,1]$. To this aim, notice that whenever $T$ is a linear operator acting on functions defined over some measure space $(X, \mu)$ with $\mu(X)=1$, that is bounded on $L^2(X)$  and bounded from $L \log^{1/2} L (X)$ to $L^{1, \infty} (X)$, then it is bounded from $L \log^{r+3/2} L (X)$ to $L \log^r L(X)$ for every $r \geq 1/2$. In particular, one has
\begin{equation}\label{Log_map}
 \int_X |T(f) (x)| \log^r (1 + |T(f)(x)|) d \mu(x) \lesssim 1  + \int_X |f(x)| \log^{r+\frac{3}{2}} (1 + |f(x)|) d\mu(x) .
\end{equation}

Using (\ref{Log_map}) and induction, one can easily establish sharp weak-type estimates for Marcinkiewicz multiplier operators of the form $T_{\omega_1} \otimes \cdots \otimes T_{\omega_n}$ on $\T^n$,  $\omega_i \in [0,1]$.

\begin{lemma}\label{w-k_tensor}
Let $n \in \N$ be a given dimension.

For $\omega_1, \cdots, \omega_n \in [0,1]$ consider the $n$-dimensional Marcinkiewicz multiplier operator $T_{\omega_1} \otimes \cdots \otimes T_{\omega_n}$, where $T_{\omega_i}$ is as in section $\ref{alternative}$.

Then the operator $T_{\omega_1} \otimes \cdots  \otimes T_{\omega_n}$ maps $L \log^{a_n} L (\T^n)$ to $L^{1,\infty} (\T^n)$, where $a_n = 1/2 + 3(n-1)/2$, and in particular,
\begin{equation}\label{weak_map_prop}
\| T_{\omega_1} \otimes \cdots  \otimes T_{\omega_n} (f) \|_{L^{1,\infty} (\T^n)} \lesssim 1 + \int_{\T^n} |f| \log^{a_n} (1+ |f|) .
\end{equation}
\end{lemma}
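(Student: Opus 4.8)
The plan is to prove Lemma \ref{w-k_tensor} by induction on the dimension $n$, using the one-parameter endpoint estimate (\ref{w-t}) as the base case and the ``self-improving'' mapping property (\ref{Log_map}) to carry out the induction step. First I would observe that a tensor-product operator $T_{\omega_1} \otimes \cdots \otimes T_{\omega_n}$ on $\T^n = \T \times \T^{n-1}$ factors as the composition $(T_{\omega_1} \otimes \mathrm{Id})\circ(\mathrm{Id} \otimes (T_{\omega_2}\otimes\cdots\otimes T_{\omega_n}))$, so that the estimate for dimension $n$ can be obtained by first applying the $(n-1)$-dimensional operator in the last $n-1$ variables, and then applying $T_{\omega_1}$ in the first variable, with the first step producing output in a suitable $L\log^r L$ class that the second step can absorb.

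The key point is to keep track of the Zygmund classes carefully. For a single operator $T_\omega$ on $\T$, from (\ref{w-t}) and boundedness on $L^2(\T)$ one gets by Marcinkiewicz-type interpolation (exactly as in section \ref{alternative}) that $T_\omega$ maps $L\log^{r+3/2}L(\T)$ to $L\log^r L(\T)$ for every $r \geq 1/2$; this is the content of (\ref{Log_map}) applied with $X = \T$. For the induction, suppose (\ref{weak_map_prop}) holds in dimension $n-1$, i.e.\ $T_{\omega_2}\otimes\cdots\otimes T_{\omega_n}$ maps $L\log^{a_{n-1}}L(\T^{n-1})$ to $L^{1,\infty}(\T^{n-1})$ with $a_{n-1} = 1/2 + 3(n-2)/2$; upgrading this endpoint estimate via interpolation with $L^2$, it also maps $L\log^{a_{n-1}+r}L(\T^{n-1})$ to $L\log^r L(\T^{n-1})$ for each $r\geq 0$ (again by (\ref{Log_map})-type reasoning applied fiberwise). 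Applying this for the last $n-1$ variables with $r = 3/2$, and using Fubini together with the fact that $L\log^s L(\T^n)$ norms can be estimated fiberwise (since $\T^n$ has finite measure and $\log^s(1+|f|)$ is subadditive-up-to-constants in the relevant sense), we see that $\mathrm{Id}\otimes(T_{\omega_2}\otimes\cdots\otimes T_{\omega_n})$ maps $L\log^{a_{n-1}+3/2}L(\T^n)$ into the mixed-norm space with $L\log^{3/2}L$ in the last $n-1$ variables, which embeds appropriately. Then applying $T_{\omega_1}\otimes\mathrm{Id}$ in the first variable via (\ref{w-t}) (in its vector-valued form, with values in $L^{3/2}$ or rather exploiting that $L\log^{3/2}L$ in the remaining variables lands in $L^{1,\infty}$ after the one-dimensional step) yields the $L^{1,\infty}(\T^n)$ bound, with the exponent $a_n = a_{n-1} + 3/2 = 1/2 + 3(n-1)/2$, completing the induction.

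The main obstacle will be the bookkeeping of mixed-norm Zygmund spaces and making the fiberwise / Fubini argument rigorous: in particular, ensuring that composing the two partial operators respects the iterated $L\log L$ structure and that the constants remain uniform in $\omega_1,\ldots,\omega_n$. One must be slightly careful because $L\log^r L$ is not a normed space for which Fubini applies verbatim; instead one works with the equivalent formulation $\int_{\T^n}|g|\log^r(1+|g|)$, uses that for the one-dimensional endpoint estimate one has the quantitative form (\ref{w-t_S})-style bound $\|T_\omega h\|_{L^{1,\infty}(\T)}\lesssim 1+\int_\T |h|\log^{1/2}(1+|h|)$, and integrates the fiberwise inequalities, invoking the characterization (\ref{weak_sup}) of the weak $L^1$ norm to pass from a fiberwise $L^{1,\infty}$ control to a global one. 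Once the vector-valued / iterated version of the Tao–Wright endpoint estimate is set up, the induction itself is routine, each step costing exactly one extra power $3/2$ of the logarithm as dictated by (\ref{Log_map}).
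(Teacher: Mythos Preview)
Your plan is essentially the paper's proof: induction on $n$, with base case (\ref{w-t}), Fubini to pass between fibers and the full torus, and (\ref{Log_map}) supplying the extra $3/2$ powers of the logarithm at each step. The one substantive difference is the order of composition. You put the one-dimensional weak-type bound (\ref{w-t}) on the \emph{outside} and an $(n-1)$-dimensional strong-type estimate on the inside; the paper does the reverse, writing $T_{\omega_1}\otimes\cdots\otimes T_{\omega_{n+1}}=(T_{\omega_1}\otimes\cdots\otimes T_{\omega_n})\circ(\mathrm{Id}\otimes T_{\omega_{n+1}})$, applying the $n$-dimensional inductive weak-type hypothesis as the outer bound after Fubini in $x_{n+1}$, and then invoking the \emph{one-dimensional} (\ref{Log_map}) with $r=a_n$ in the last variable to control $\int_\T|T_{\omega_{n+1}}f|\log^{a_n}(1+|T_{\omega_{n+1}}f|)\,dx_{n+1}$ by $\int_\T|f|\log^{a_n+3/2}(1+|f|)\,dx_{n+1}$. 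The paper's ordering is cleaner because it only ever needs (\ref{Log_map}) exactly as stated, for a one-parameter operator; your ordering forces you to assert a multi-parameter analogue of (\ref{Log_map}), and your stated version, ``$L\log^{a_{n-1}+r}L\to L\log^r L$'', has the wrong gap: interpolating $L\log^s L\to L^{1,\infty}$ with $L^2\to L^2$ costs $s+1$ logs, not $s$ (already in (\ref{Log_map}) the gap is $3/2=1/2+1$, not $1/2$). This slip is harmless for your final exponent count since the output still lands in $L\log^{1/2}L$, but adopting the paper's order avoids the issue entirely and, with it, all the mixed-norm and vector-valued bookkeeping you flag as the main obstacle.
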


\begin{proof} We proceed by induction on $n \in \N$. The case $n=1$ corresponds to (\ref{w-t}).

Assume now that for some integer $n>1$ the desired inequality (\ref{weak_map_prop}) holds. To obtain the $(n+1)$-dimensional case, fix an arbitrary $\alpha >0$ and some $f $ in $L\log^{a_{n+1}} L(\T^{n+1})$. Then, by using Fubini's theorem, we may write
\begin{align*}
& | \big\{ (x_1, \cdots, x_{n+1}) \in \T^{n+1} : | T_{\omega_1} \otimes \cdots \otimes T_{\omega_{n+1}} (f) (x_1, \cdots, x_{n+1}) | > \alpha \big\}| = \\
& \int_{\T} | \big\{ (x_1, \cdots x_n) \in \T^n : |   T_{\omega_1} \otimes \cdots \otimes T_{\omega_n} (T_{\omega_{n+1}} (f) ) (x_1, \cdots, x_{n+1}) | > \alpha \big\} | d x_{n+1}.
\end{align*}
Hence, by our inductive hypothesis and Fubini's theorem,
\begin{align*}
& \alpha | \big\{ (x_1, \cdots, x_{n+1}) \in \T^{n+1} : | T_{\omega_1} \otimes \cdots \otimes T_{\omega_{n+1}} (f) (x_1, \cdots, x_{n+1}) | > \alpha \big\}| \lesssim \\
& 1+ \int_{\T^n } \big[ \int_{\T} | T_{\omega_{n+1}} (f)  (x_1, \cdots, x_{n+1}) | \log^{a_n}(1 + | T_{\omega_{n+1}} (f)  (x_1, \cdots, x_{n+1}) | )  d x_{n+1} \big] d x_1 \cdots d x_n
\end{align*}
and so, by (\ref{Log_map}) and Fubini's theorem,
\begin{align*}
& \alpha | \big\{ (x_1, \cdots, x_{n+1}) \in \T^{n+1} : | T_{\omega_1} \otimes \cdots \otimes T_{\omega_{n+1}} (f) (x_1, \cdots, x_{n+1}) | > \alpha \big\}| \lesssim \\
& 1 +  \int_{\T^n } \big[ \int_{\T} |  f  (x_1, \cdots, x_{n+1}) | \log^{a_n +3/2}(1 + | f  (x_1, \cdots, x_{n+1}) |)   d x_{n+1} \big] d x_1 \cdots d x_n = \\
&  1 + \int_{\T^{n+1}} |f(x_1 ,\cdots, x_{n+1})| \log^{ a_n + 3/2} (1+|f(x_1, \cdots, x_{n+1})|) dx_1 \cdots d x_{n+1} .
\end{align*}
Since $a_{n+1} = a_n + 3/2$, the proof of the lemma is complete. \end{proof}

Now, an adaptation of the argument used in the one-dimensional case gives the main result of this paragraph.

\begin{proposition}
For any given $n \in \N$, there is a constant $C_n>0$ such that the $n$-parameter Littlewood-Paley square function satisfies the weak-type inequality
\begin{equation}\label{w-t_n}
\| S_n (f) \|_{L^{1,\infty} (\T^n)} \leq C_n \big[ 1 + \int_{\T^n} |f| \log^{a_n} (1+ |f |) \big],
\end{equation}
for all trigonometric polynomials $f$ on $\T^n$, where $a_n = 1/2 +3(n-1)/2$. Moreover, the exponent $a_n$ in $(\ref{w-t_n})$ is sharp.
\end{proposition}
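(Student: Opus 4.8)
The plan is to establish (\ref{w-t_n}) by combining Lemma~\ref{w-k_tensor} with a randomisation argument, exactly mirroring the one-dimensional proposition, and then to prove sharpness of $a_n$ by testing against a tensor product of de la Vall\'ee Poussin kernels. For the positive direction, fix a trigonometric polynomial $f$ on $\T^n$. By the $n$-dimensional Khintchine inequality, applied pointwise in the variables $(\omega_1,\dots,\omega_n)$, together with Fubini's theorem, for every measurable $E \subset \T^n$ with $|E|>0$ there is a choice of $(\omega_1',\dots,\omega_n') \in [0,1]^n$, depending on $f$ and $E$, such that
\begin{equation*}
\| T_{\omega_1'} \otimes \cdots \otimes T_{\omega_n'} (f) \|_{L^{1/2}(E)} \gtrsim_n \| S_n (f) \|_{L^{1/2}(E)}.
\end{equation*}
Combining this with the characterisation (\ref{weak_sup}) of the $L^{1,\infty}$ (quasi-)norm via averages of $L^{1/2}$ norms over sets of finite measure, and then invoking (\ref{weak_map_prop}) of Lemma~\ref{w-k_tensor} for the operator $T_{\omega_1'} \otimes \cdots \otimes T_{\omega_n'}$ (whose bound is uniform in the $\omega_i'$), yields
\begin{equation*}
\| S_n (f) \|_{L^{1,\infty}(\T^n)} \lesssim_n \sup_{\substack{E \subset \T^n \\ 0 < |E| \leq 1}} |E|^{-1} \| T_{\omega_1'} \otimes \cdots \otimes T_{\omega_n'}(f) \|_{L^{1/2}(E)} \lesssim_n 1 + \int_{\T^n} |f| \log^{a_n}(1+|f|),
\end{equation*}
which is (\ref{w-t_n}).

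For the sharpness, suppose for contradiction that (\ref{w-t_n}) holds with $a_n$ replaced by some $r < a_n$; by monotonicity of the $L\log^r L$ scale we may equivalently suppose the weak-type bound $\| S_n(f)\|_{L^{1,\infty}(\T^n)} \lesssim \|f\|_{L\log^r L(\T^n)}$ with $r < a_n$. Interpolating this with the $L^2(\T^n)$-boundedness of $S_n$ (which follows from (\ref{L-P_n}) at $p=2$, itself a consequence of Plancherel) via Marcinkiewicz-type interpolation on the $L\log L$ scale gives $\|S_n(f)\|_{L^1(\T^n)} \lesssim \|f\|_{L\log^{r+1}L(\T^n)}$. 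Now test with $f = V_{2^N} \otimes \cdots \otimes V_{2^N}$, the $n$-fold tensor product of the de la Vall\'ee Poussin kernel of order $2^N$. On the upper side, using $\| V_{2^N}\|_{L\log^{r+1}L(\T)} \lesssim N^{r+1}$ (from the remark in the Notation section that $\int_\T |V_n|\log^s(1+|V_n|) \lesssim \log^s n$, together with the comparability of $L\log^s L$ and $\int |f|\log^s(1+|f|)$ on a probability space) and the tensor structure of the $L\log^{r+1}L$ norm on a product probability space, one gets $\|f\|_{L\log^{r+1}L(\T^n)} \lesssim_n N^{n(r+1)}$. On the lower side, since $S_n(V_{2^N}^{\otimes n})(x_1,\dots,x_n) = S(V_{2^N})(x_1)\cdots S(V_{2^N})(x_n)$ factors, we have $\|S_n(f)\|_{L^1(\T^n)} = \|S(V_{2^N})\|_{L^1(\T)}^n \gtrsim (N^{3/2})^n = N^{3n/2}$, using the one-dimensional lower bound $\|S(V_{2^N})\|_{L^1(\T)} \gtrsim N^{3/2}$ established just before the one-dimensional proposition. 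Comparing, $N^{3n/2} \lesssim_n N^{n(r+1)}$ for all $N$, forcing $3n/2 \leq n(r+1)$, i.e. $r \geq 1/2$; but this is too weak — it only recovers $a_1$. The issue is that the crude tensor test function loses the right power, so I would instead argue dimension-by-dimension: the real obstacle is to extract from a hypothetical $L\log^r L \to L^{1,\infty}$ bound for $S_n$ the stronger $L\log^{r+(n-1)}L \to L\log^{n-1} L$ mapping by iterating the interpolation-and-slicing scheme of (\ref{Log_map}) in reverse (peeling off one variable at a time, each step converting an $L^{1,\infty}$ endpoint into an $L\log^{3/2}L \to L^1$ gain after pairing with $L^2$), and only then test against $V_{2^N}^{\otimes n}$; tracking the exponents through $n$ such steps yields $N^{3n/2} \lesssim N^{(r + 3(n-1)/2) + 1}$, hence $r \geq 1/2 + \text{(correction)}$.

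The main obstacle is precisely this bookkeeping in the sharpness half: one must set up the correct ``de-tensoring'' of a weak-type hypothesis on $\T^n$ into a chain of one-variable interpolation inequalities so that the accumulated logarithmic exponents match $a_n = 1/2 + 3(n-1)/2$, and verify that the tensor product $V_{2^N}^{\otimes n}$ simultaneously saturates the lower bound $\|S_n(f)\|_{L^1} \gtrsim N^{3n/2}$ and has $L\log^s L$ norm of exact order $N^{ns}$ on the product probability space $\T^n$ — the latter requiring the standard fact that $\|g_1 \otimes \cdots \otimes g_n\|_{L\log^s L(\prod X_i)}$ is comparable to $\prod_i \|g_i\|_{L\log^s L(X_i)}$ for $g_i$ on probability spaces, which I would either cite from \cite{BS} or verify directly from the integral formulation. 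Everything else — the Khintchine/Fubini randomisation, the appeal to (\ref{weak_sup}), and the $L^2$ endpoint — is routine given the tools already assembled in the excerpt.
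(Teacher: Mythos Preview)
Your positive direction matches the paper's proof exactly: Khintchine plus Fubini to choose $(\omega_1',\dots,\omega_n')$, then (\ref{weak_sup}) and Lemma~\ref{w-k_tensor}.

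The sharpness argument, however, contains a genuine error that you then try to patch with an unnecessary and vague iteration scheme. The mistake is the tensor estimate: you assert $\|V_{2^N}^{\otimes n}\|_{L\log^{r+1}L(\T^n)} \lesssim_n N^{n(r+1)}$ and later invoke the ``standard fact'' that Orlicz norms tensorize multiplicatively, $\|g_1 \otimes \cdots \otimes g_n\|_{L\log^s L} \sim \prod_i \|g_i\|_{L\log^s L}$. This is \emph{not} a standard fact --- it is false for $L\log^s L$ --- and the bound $N^{n(r+1)}$, while a valid upper bound, is far from sharp. The correct estimate is simply
\[
\int_{\T^n} |V_{2^N}^{\otimes n}|\,\log^{r+1}\bigl(1 + |V_{2^N}^{\otimes n}|\bigr) \;\lesssim_n\; N^{r+1},
\]
which follows at once from $\|V_{2^N}^{\otimes n}\|_{L^1(\T^n)} = \|V_{2^N}\|_{L^1(\T)}^n \leq 3^n$ together with $\|V_{2^N}^{\otimes n}\|_{L^\infty(\T^n)} \lesssim 2^{nN}$: bound the logarithm by its value at the supremum, so that $\log^{r+1}(1+|f|) \leq \log^{r+1}(1+\|f\|_\infty) \sim_n N^{r+1}$, and integrate against $|f|$. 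With this corrected upper bound the direct comparison $N^{3n/2} \lesssim N^{r+1}$ yields $r \geq 3n/2 - 1 = a_n$ immediately. This is exactly what the paper does; the test function $V_{2^N}^{\otimes n}$ was never the problem, and no ``de-tensoring'' or variable-by-variable peeling is needed.
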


\begin{proof} As in the one-dimensional case, we use Khintchine's inequality and (\ref{weak_sup}) to show that there exists a choice of $\omega_1', \cdots, \omega_n' \in [0,1]$ such that
$$ \| S_n (f) \|_{L^{1,\infty} (\T^n)} \lesssim_n  \| T_{\omega_1'} \otimes \cdots \otimes T_{\omega_n' } (f) \|_{L^{1,\infty} (\T^n)}.$$
Hence, by using (\ref{weak_map_prop}), we deduce that $S_n$ satisfies the desired weak-type inequality (\ref{w-t_n}).

To prove that the exponent $a_n$ in (\ref{w-t_n}) cannot be improved, assume that the inequality holds for some $r \geq 0$. Since $S_n$ is bounded on $L^2 (\T^n )$, it follows by interpolation that $S_n$ satisfies
$$ \| S_n (f) \|_{L^1(\T^n)} \lesssim 1 + \int_{\T^n} |f(x_1, \cdots, x_n)| \log^{r+1} (1+ |f ( x_1, \cdots, x_n)|) dx_1 \cdots dx_n .$$
If we take $f$ to be $V_{2^N} \otimes \cdots \otimes V_{2^N} $, then 
$$ \| S_n (f) \|_{L^1(\T^n)} = \| S(V_{2^N}) \|_{L^1(\T)} \cdots \| S(V_{2^N})\|_{L^1 (\T)} \gtrsim N^{3n/2}$$
but $\int_{\T^n} |f| \log^{r+1} (1+ |f|) \lesssim N^{r+1}$. Hence, by letting $N \rightarrow \infty$, we see that we must have $r \geq -1 + 3n/2 = a_n$.
\end{proof}

\begin{rmk}\label{second_proof}
As it is mentioned in section $\ref{extension}$, by using Lemma $\ref{w-k_tensor}$, interpolation, and converse extrapolation (as in the one-dimensional case), one can give an alternative proof of Proposition $\ref{constants_d}$.
\end{rmk}

\section{Endpoint mapping properties of the multi-parameter rough Littlewood-Paley square function in the euclidean case}\label{sharp_rough}
If $f$ is a Schwartz function on $\R$, we define its rough Littlewood-Paley square function $S_{\R} (f)$ by
$$ S_{\R} (f) (x) = \Big(  \sum_{k \in \Z} |P_k (f) (x)|^2 \Big)^{1/2}, $$
where $(P_k f)^{\widehat{\ }} (\xi) =  \chi_{[2^k, 2^{k+1})} (\xi)  \widehat{f} (\xi) +  \chi_{(-2^{k+1}, -2^k]} (\xi)  \widehat{f} (\xi) $ is the rough Littlewood-Paley projection at frequencies $|\xi| \sim2^k$, $k \in \Z$. For $n \in \N$, the $n$-parameter rough Littlewood-Paley square function is given by
$$ S_{\R^n} (f) (x) = \Big(  \sum_{k_1, \cdots, k_n \in \Z} |P_{k_1} \otimes \cdots \otimes P_{k_n}  (f) (x)|^2 \Big)^{1/2} $$
for $f$ initially belonging to the class of Schwartz functions on $\R^n$.

In the following proposition we show that the $n$-parameter rough Littlewood-Paley square function on $\R^n$ satisfies weak-type inequalities analogous to the ones obtained in the previous section, if we restrict ourselves to compacts subsets of $\R^n$.
 
\begin{proposition}
For any given $n \in \N$ and each compact set $K $ in $\R^n$, there is a constant $C_{K,n}>0$ such that the $n$-parameter Littlewood-Paley square function satisfies the weak-type inequality
\begin{equation}\label{w-t_rough}
 \| S_{\R^n} (f) \|_{L^{1, \infty} (K)} \leq C_{K,n}  \big[ 1+  \int_K |f| \log^{a_n} (1+|f|)  \big]
\end{equation} 
for each measurable function $f$ supported in $K$, where $a_n = 1/2 + 3(n-1)/2$. Moreover, the exponent $a_n$ in $(\ref{w-t_rough})$ is sharp.
\end{proposition}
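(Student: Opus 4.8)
The plan is to mirror Section~\ref{sharp_w-t}, using the local weak-type estimate (\ref{weak-type}) of Tao and Wright in place of (\ref{w-t}), and to prove sharpness by constructing a compactly supported substitute on $\R$ for the de la Vall\'{e}e Poussin kernel. For the upper bound it suffices to prove (\ref{w-t_rough}) when $K = K_1 \times \cdots \times K_n$ is a box. For $\omega \in [0,1]$ set $T^{\R}_{\omega} = \sum_{k \in \Z} r_k(\omega) P_k$; its symbol has sup-norm $1$ and is constant on each dyadic interval $\pm[2^k,2^{k+1})$, so $T^{\R}_{\omega}$ is a Marcinkiewicz multiplier operator on $\R$ whose Marcinkiewicz constant is bounded independently of $\omega$, and it is bounded on $L^2(\R)$ with norm $1$. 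Hence, by (\ref{weak-type}), for every $i$ and every $f$ supported in $K_i$ one has $\| T^{\R}_{\omega} f \|_{L^{1,\infty}(K_i)} \lesssim_{K_i} \| f \|_{L \log^{1/2} L (K_i)}$, uniformly in $\omega$. Applying, on the finite-measure space $K_i$, the same Marcinkiewicz-type interpolation that yields (\ref{Log_map}), we get its local counterpart: for $r \geq 1/2$ and $f$ supported in $K_i$,
\begin{equation*}
\int_{K_i} |T^{\R}_{\omega} f| \log^r (1 + |T^{\R}_{\omega} f|) \lesssim_{K_i, r} 1 + \int_{K_i} |f| \log^{r + 3/2}(1 + |f|),
\end{equation*}
uniformly in $\omega$. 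Feeding this into the induction of Lemma~\ref{w-k_tensor} verbatim — applying Fubini's theorem one variable at a time and using that $T^{\R}_{\omega_j}$, acting in the $j$-th variable, preserves support in $K_1 \times \cdots \times K_n$ — gives, for $f$ supported in $K$ and all $\omega_1, \ldots, \omega_n \in [0,1]$,
\begin{equation*}
\| T^{\R}_{\omega_1} \otimes \cdots \otimes T^{\R}_{\omega_n}(f) \|_{L^{1,\infty}(K)} \lesssim_{K,n} 1 + \int_K |f| \log^{a_n}(1 + |f|).
\end{equation*}
Finally, exactly as in Section~\ref{sharp_w-t}, combining Khintchine's inequality and Fubini's theorem with the characterisation (\ref{weak_sup}) of $L^{1,\infty}(K)$ reduces the estimate for $\| S_{\R^n}(f) \|_{L^{1,\infty}(K)}$ to the previous one, which is uniform in $\omega_1, \ldots, \omega_n$; this is (\ref{w-t_rough}).

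For sharpness, suppose (\ref{w-t_rough}) holds with $a_n$ replaced by some $r \geq 0$. Since $S_{\R^n}$ is bounded on $L^2(\R^n)$, interpolation (as in the torus case) gives $\| S_{\R^n}(f) \|_{L^1(K)} \lesssim_{K,n} 1 + \int_K |f| \log^{r+1}(1 + |f|)$ for $f$ supported in $K$. To build a test function, fix a smooth bump $\eta$ with $\eta \equiv 1$ on $[-1,1]$ and $\operatorname{supp} \eta \subset [-2,2]$, and let $g_N$ be the (rescaled, hence rapidly decaying) Schwartz function with $\widehat{g_N}(\xi) = \eta(2^{-N-2}\xi)$, an $L^1$-normalised bump of width $\sim 2^{-N}$ at the origin with $\| g_N \|_{L^{\infty}(\R)} \sim 2^N$; set $\tilde g_N = g_N \chi_{[-1,1]}$, which is supported in the fixed compact $[-1,1]$. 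For $1 \leq k \leq N$ one has $\widehat{g_N} \equiv 1$ on $\{ |\xi| \sim 2^k \}$, so $P_k g_N$ is the function with Fourier transform $\chi_{[2^k,2^{k+1})} + \chi_{(-2^{k+1},-2^k]}$; a direct computation gives $|P_k g_N(x)| \lesssim \min(2^k, |x|^{-1})$ and $\| P_k g_N \|_{L^1([-1,1])} \gtrsim k$. Since $\| g_N - \tilde g_N \|_{L^2(\R)}$ decays faster than any power of $2^{-N}$, so does $\| P_k(g_N - \tilde g_N) \|_{L^1([-1,1])}$ (by Plancherel and the Cauchy--Schwarz inequality), so $\| P_k \tilde g_N \|_{L^1([-1,1])} \gtrsim k$ for $N$ large. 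Taking $f = \tilde g_N \otimes \cdots \otimes \tilde g_N$ on $K = [-1,1]^n$, one has $S_{\R^n}(f)(x) = \prod_{i=1}^n S_{\R}(\tilde g_N)(x_i)$, so by Minkowski's inequality
\begin{equation*}
\| S_{\R^n}(f) \|_{L^1(K)} = \prod_{i=1}^n \| S_{\R}(\tilde g_N) \|_{L^1([-1,1])} \geq \prod_{i=1}^n \Big( \sum_{k=1}^N \| P_k \tilde g_N \|_{L^1([-1,1])}^2 \Big)^{1/2} \gtrsim_n N^{3n/2},
\end{equation*}
while $\int_K |f| \log^{r+1}(1 + |f|) \lesssim_{n,r} N^{r+1}$ because $\| \tilde g_N \|_{L^1([-1,1])} \lesssim 1$ and $\int_{[-1,1]} |\tilde g_N| \log^{r+1}(1 + |\tilde g_N|) \lesssim_r N^{r+1}$. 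Letting $N \to \infty$ forces $r + 1 \geq 3n/2$, i.e.\ $r \geq 3n/2 - 1 = a_n$.

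The main obstacle is the sharpness direction. On $\T^n$ the extremiser is the band-limited de la Vall\'{e}e Poussin kernel, but on $\R$ no band-limited function is compactly supported, so one must work with the truncation $\tilde g_N$ and check that truncating does not destroy the lower bound $\| P_k \tilde g_N \|_{L^1(K)} \gtrsim k$. The point is twofold: the rough projection kernel decays like $|x|^{-1}$, so its $L^1$ norm over an interval of length $\sim 2^k$ grows like $k$ (this is what replaces $\| \Delta_k(V_{2^N}) \|_{L^1(\T)} \gtrsim k$); and the truncation error is harmless because it is controlled in $L^2$, where the rough projections are bounded, and $K$ has finite measure.
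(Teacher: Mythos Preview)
Your proof is correct and follows essentially the same route as the paper: the upper bound via the Euclidean analogue of Lemma~\ref{w-k_tensor} using the local Tao--Wright estimate (\ref{weak-type}), and sharpness via a truncated band-limited bump with the truncation error controlled in $L^2$. One phrasing point: the claim that ``$T^{\R}_{\omega_j}$, acting in the $j$-th variable, preserves support in $K_1 \times \cdots \times K_n$'' is literally false (Fourier multipliers do not preserve compact support); what makes the induction go through is rather that $T^{\R}_{\omega_{n+1}}$ leaves the support in the \emph{remaining} variables $x_1,\ldots,x_n$ unchanged, so the inductive hypothesis applies to $T^{\R}_{\omega_{n+1}}(f)(\cdot,x_{n+1})$ on $K_1\times\cdots\times K_n$, after which the local version of (\ref{Log_map}) handles the last variable. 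The paper treats the truncation error slightly more economically --- bounding $\|S_{\R}(g-f)\|_{L^1([-1,1])}$ directly via Cauchy--Schwarz and $L^2$-boundedness of $S_{\R}$ rather than projection by projection --- but your per-$P_k$ estimate is equally valid.
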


\begin{proof} The argument that establishes (\ref{w-t_rough}) is similar to the one given in the previous section, where one uses (\ref{weak-type}) instead of (\ref{w-t}).

It remains to prove sharpness. Consider the one-dimensional case first. For this, assume that for some $r\geq 0$ one has
$$  \| S_{\R } (f) \|_{L^{1, \infty} ([-1,1])} \lesssim   1+  \int_{[-1,1]} |f| \log^r (1+|f|)    $$
for every measurable function $f$ supported in $[-1,1]$. By interpolation, we deduce that
\begin{equation}\label{inter}  
\| S_{\R } (f) \|_{L^1 ([-1,1])} \lesssim  1+  \int_{[-1,1]} |f| \log^{r+1} (1+|f|) 
\end{equation}
for all measurable functions $f$ with $\mathrm{supp}(f) \subset [-1,1]$. To show that $r \geq a_1 = 1/2$, let $N $ be a large positive integer to be chosen later and let $\phi $ be a fixed Schwartz function such that $\mathrm{supp}(\phi) \subset [-2,2]$ and $\phi|_{[-1,1]} \equiv 1$. Define $g(x) = 2^N \widecheck{\phi} (2^N x)$, $x \in \R$. Then $g$ is a Schwartz function satisfying $\| g\|_{L^1 (\R) } \sim 1$, $\| g \|_{L^{\infty} (\R)} \lesssim 2^N$, where the implied constants depend only on $\phi$ and not on $N$. Hence, 
\begin{equation}\label{eq1}
\int_{[-1,1] } |g| \log^{r+1} (1+ |g|) \lesssim N^{r+1}. 
\end{equation} 
Using Minkowski's inequality and the fact that $\widehat{g}|_{[-2^N,2^N]} \equiv 1$ we get
$$ \| S_{\R } (g) \|_{L^1 ([-1,1])} \geq \Big( \sum_{k \in \Z} \| P_k (g) \|^2_{L^1 ([-1,1])} \Big)^{1/2} \geq \Big( \sum_{k=1}^N \| P_k (g) \|^2_{L^1 ([-1,1])} \Big)^{1/2} \gtrsim N^{3/2}. $$
Define $f = g \chi_{[-1,1]}$ and $e =g -f$. One can easily check that, by the construction of $g$, the ``error'' satisfies $\| e \|_{L^2 (\R)} \lesssim 1$. Moreover, $f$ is supported in $[-1,1]$ and $ \| S_{\R } (g) \|_{L^1 ([-1,1])} \leq \sqrt{2} [  \| S_{\R } (f) \|_{L^1 ([-1,1])} +  \| S_{\R } (e) \|_{L^1 ([-1,1])} ] $. By using the Cauchy-Schwarz inequality,
$  \| S_{\R } (e) \|_{L^1 ([-1,1])} \leq \sqrt{2 }  \| S_{\R } (e) \|_{L^2 ([-1,1])} \leq  \sqrt{2 }  \| S_{\R } (e) \|_{L^2 (\R)}$ and since $   \| S_{\R } (e) \|_{L^2 (\R) } = \| e \|_{L^2 (\R)} \lesssim 1$, we deduce that 
\begin{equation}\label{eq2}
  \| S_{\R } (f) \|_{L^1 ([-1,1])} \gtrsim N^{3/2}.
\end{equation} 
Since $|f| \leq |g|$, (\ref{eq1}) implies that 
\begin{equation}\label{eq3} \int_{[-1,1] } |f| \log^{r+1} (1+ |f|)  \lesssim N^{r+1} .
\end{equation} 
Combining (\ref{inter}), (\ref{eq2}) and (\ref{eq3}), we get $N^{3/2} \lesssim N^{r+1}$. Letting $N \rightarrow \infty$, it follows that $r \geq a_1 = 1/2$, as desired.
 
To prove sharpness in the $n$-dimensional case, assume that (\ref{w-t_rough}) holds for some $r \geq 0$ and for $f$ being as above, take $ h = f \otimes \cdots \otimes f$. Then $h$ is supported in $[-1,1]^n$,  $\int_{[-1,1]^n} |h| \log^{r+1} (1 + |h|) \lesssim N^{r+1} $ and
$$ \| S_{\R^n } (h) \|_{L^1 ([-1,1]^n)} = \| S_{\R} (f) \|_{L^1 ([-1,1])} \cdots \| S_{\R} (f) \|_{L^1 ([-1,1])} \gtrsim N^{3n/2}. $$
Therefore, we must have $r \geq a_n = 1/2 + 3(n-1)/2$.
\end{proof}

\section{Negative results}\label{negative_results}
It is well-known that $S_{\R}$ maps $H^1 (\R)$ to $L^{1, \infty} (\R)$. Indeed, one may write 
\begin{equation}\label{identity}
 S_{\R} (f) (x) = \Big( \sum_{k \in \Z} | P_k (f_k) (x)|^2 \Big)^{1/2} , 
\end{equation}
where $f_k = \widetilde{P_k} (f)$ and $\widetilde{P_k}$ denotes the multiplier operator whose corresponding symbol is $\eta(2^{-k} \cdot)$, where $\eta$ is an even Schwartz function supported in $\pm [1/4,4]$ with $\eta|_{[1,2]}\equiv 1$. By \cite[Corollary 2.13 on p.~488]{RDF}, one has
\begin{equation}\label{vector-valued_w-t}
 \big| \big\{ x\in \R:  \Big( \sum_{k \in \Z} | P_k (f_k) (x)|^2 \Big)^{1/2} > \alpha \big\} \big| \leq \frac{C} {\alpha} \Big\|  \Big( \sum_{k \in \Z} |f_k|^2 \Big)^{1/2} \Big\|_{L^1 (\R)} 
\end{equation}
for every $\alpha >0$. Hence, the estimate $ \| S_{\R} (f) \|_{L^{1,\infty} (\R)} \lesssim \| f \|_{H^1 (\R)}$ follows from (\ref{identity}) and the fact that the right-hand side of (\ref{vector-valued_w-t}) is majorised by $A \|f \|_{H^1 (\R)}$, where $A>0$ is a constant that depends only on the choice of $\eta$, see \cite{Stein_multiplicateurs}. Similarly, the Littlewood-Paley square function $S$ maps $H^1 (\T)$ to $L^{1,\infty} (\T)$.

A natural question is whether an analogous weak-type estimate holds for the two-parameter rough Littlewood-Paley square function. In the two-parameter setting, a candidate endpoint function space is the product Hardy space $H^1 (\R \times \R)$. Our next result shows that such an estimate is not possible in the product setting, as $S_{\R^2}$ does not even locally map $H^1_{\mathrm{rect}} (\R \times \R)$ to $L^{1,\infty}(\R^2)$.

\begin{proposition}\label{negative}
The two-parameter rough Littlewood-Paley square function does not locally map $H^1_{\mathrm{rect}} (\R \times \R)$ to $L^{1,\infty}(\R^2)$.
\end{proposition}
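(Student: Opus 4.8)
The plan is to disprove a local bound by an explicit example: we construct, for each large $N$, a function $f_N\in H^1_{\mathrm{rect}}(\R\times\R)$ supported in the fixed cube $Q=[0,1]^2$ with $\|f_N\|_{H^1_{\mathrm{rect}}}\le 1$ but $\|S_{\R^2}(f_N)\|_{L^{1,\infty}(Q)}\to\infty$. This suffices: if $S_{\R^2}$ mapped $H^1_{\mathrm{rect}}(\R\times\R)$ into $L^{1,\infty}(Q)$ locally, with constant $C$, then $\|S_{\R^2}(f_N)\|_{L^{1,\infty}(Q)}\le C$ for all $N$, a contradiction. (One may alternatively glue the $f_N$ with pairwise disjoint Fourier supports into a single $f\in H^1_{\mathrm{rect}}$ whose square function is not locally in $L^{1,\infty}$, but the sequence formulation is cleaner.)

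The functions $f_N$ are superpositions $f_N=\sum_R\lambda_R a_R$ of rectangle atoms over a family of dyadic rectangles in $Q$ of widely varying eccentricities — the ``staircase'' family $R_j=[0,2^{-j}]\times[0,2^{j-N}]$ ($0\le j\le N$) and its dyadic descendants, of the kind underlying the proper inclusion $H^1_{\mathrm{rect}}(\R\times\R)\subsetneq H^1(\R\times\R)$. Because each atom costs $|\lambda_R|$ in $H^1_{\mathrm{rect}}$ regardless of the eccentricity of $R$, the $\lambda_R$ can be chosen so that $\sum_R|\lambda_R|\le 1$. The Fourier content of each $a_R$ is placed, as in the function $g$ of Section~\ref{sharp_rough}, so that a large number of the atoms have nontrivial, phase-aligned Fourier mass in each of many Littlewood--Paley cells $[2^{k_1},2^{k_1+1})\times[2^{k_2},2^{k_2+1})$. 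The key mechanism is then the interaction of these atoms with the \emph{rough} projections $P_{k_1}\otimes P_{k_2}$: each $P_k$ has a Dirichlet-type kernel, so $P_k$ applied to a function supported in a short interval spreads it out over scale one with size $\sim\min(2^k,|x|^{-1})$ — the very phenomenon that produced $\|\Delta_k(V_{2^N})\|_{L^1(\T)}\gtrsim k$ and the sharpness of the exponents in Sections~\ref{sharp_w-t}--\ref{sharp_rough}. Hence on a subset of $Q$ of controlled measure the contributions $\sum_R\lambda_R\,P_{k_1}\otimes P_{k_2}a_R$ reinforce, and carrying out the $\ell^2$ sum over $(k_1,k_2)$ and using (\ref{weak_sup}) gives a lower bound for $\|S_{\R^2}(f_N)\|_{L^{1,\infty}(Q)}$ of the same flavour as the elementary fact $\bigl\|(xy)^{-1}\bigr\|_{L^{1,\infty}([0,1]^2)}=\infty$ while $\bigl\|x^{-1}\bigr\|_{L^{1,\infty}([0,1])}<\infty$; it is precisely this extra room available in two parameters, absent when $n=1$, that lets the square function outgrow the $H^1_{\mathrm{rect}}$-norm.

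The hard part is the bookkeeping of this genuinely two-parameter construction. First, one must verify that the $H^1_{\mathrm{rect}}$-cost really stays bounded — this is where the variation of eccentricity (and not merely of scale) of the rectangles is essential: the naive tensor-product attempt, taking $g\otimes g$ for the function $g$ of Section~\ref{sharp_rough}, already has $\|g\otimes g\|_{H^1_{\mathrm{rect}}}\sim\|g\|_{H^1(\R)}^2$, of the same size as $\|S_{\R^2}(g\otimes g)\|_{L^{1,\infty}}$, and so does not work. Second, one must upgrade the natural $L^1$-type lower bound for $S_{\R^2}(f_N)$ on the staircase to a genuine growth of the weak-$L^1$ quasinorm, i.e. absorb the logarithmic loss coming from the fact that $S_{\R^2}(f_N)$ is only bounded by $2^{O(N)}$; this is the technical core of the argument. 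The remaining points — truncating the (essentially compactly supported) building blocks to $Q$ and controlling the $L^2$-small error terms exactly as in Section~\ref{sharp_rough}, or equivalently carrying out the whole construction on $\T^2$ — are routine.
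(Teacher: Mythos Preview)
Your proposal is not a proof but a programme, and it is built on a misconception that steers you away from the actual argument. You explicitly dismiss the tensor-product attempt as hopeless, but the paper's proof shows exactly the opposite: a \emph{single} tensor-product rectangle atom already does the job. For large $N$ set $a_N(x)=2^{N-1}e^{i2\pi 2^{N-1}x}\chi_{[0,2^{-(N-1)})}(x)$; then $a_N\otimes a_N$ is one rectangle atom, so $\|a_N\otimes a_N\|_{H^1_{\mathrm{rect}}}\le 1$. A direct computation of the Dirichlet-type kernel of $P_N$ gives $|P_N(a_N)(x)|\gtrsim 1/x$ on $[8\cdot 2^{-(N-1)},1]$, hence $S_{\R^2}(a_N\otimes a_N)(x,y)\ge |P_N(a_N)(x)||P_N(a_N)(y)|\gtrsim 1/(xy)$ on $[8\cdot 2^{-(N-1)},1]^2$. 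Since the level sets of $1/(xy)$ in $[\epsilon,1]^2$ satisfy $\lambda\,|\{1/(xy)>\lambda\}|\sim\log\lambda$ for $1\ll\lambda\ll\epsilon^{-1}$, this gives $\|S_{\R^2}(a_N\otimes a_N)\|_{L^{1,\infty}([0,1]^2)}\gtrsim N$, and letting $N\to\infty$ finishes the proof. Your intuition that ``$\|(xy)^{-1}\|_{L^{1,\infty}([0,1]^2)}=\infty$ while $\|x^{-1}\|_{L^{1,\infty}([0,1])}<\infty$'' is precisely the heart of the matter --- but it already manifests at the level of a single atom, with no need for staircases, varying eccentricities, or phase-aligned superpositions.

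Your error was in conflating two different tensor-product candidates. The function $g$ of Section~\ref{sharp_rough} is not an atom: it is a de~la~Vall\'ee~Poussin-type kernel with $\|g\|_{H^1(\R)}\sim N$, so of course $g\otimes g$ has large $H^1_{\mathrm{rect}}$-cost. But a genuine rectangle atom has unit cost by definition, and one only needs to pick its Fourier content so that a \emph{single} projection $P_N\otimes P_N$ already spreads like $(xy)^{-1}$. The elaborate Carleson-type construction you sketch --- with its deferred ``bookkeeping'' and ``technical core'' --- is both unnecessary and, as written, not a proof: every substantive step is left as an assertion.
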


\begin{proof} Let $N \geq 5 $ be a large positive integer to be chosen later. Consider the function $a_N (x) = 2^{N-1} e^{i 2 \pi 2^{N-1}x} \chi_{[0,2^{-(N-1)})} (x)$. Note that for $x \neq 0 $ the kernel  of $P_N$ is given by
$$ \int_{2^{N-1}}^{2^N} e^{ i 2 \pi  \xi x } d \xi + \int_{-2^N}^{-2^{N-1}} e^{ i 2 \pi  \xi x } d \xi = \frac{e^{i 2 \pi  2^N x} - e^{i 2 \pi  2^{N-1}x}}{i 2 \pi  x} + \frac{ e^{-i 2 \pi  2^{N-1} x} -  e^{-i 2 \pi 2^N  x}  }{i 2 \pi x} $$ 
and hence, for $ 8 \cdot 2^{-(N-1)} \leq x \leq  1$ one has

\begin{align*}
P_N (a_N) (x) &= \int_{[0,2^{-(N-1)})} 2^{N-1} e^{i 2 \pi 2^{N-1} y}  \frac{e^{i 2 \pi  2^N (x-y)} - e^{i 2 \pi  2^{N-1}(x-y)}}{2 \pi i (x-y)} dy\\ 
&+  \int_{[0,2^{-(N-1)})} 2^{N-1} e^{i 2 \pi 2^{N-1} y}  \frac{e^{-i 2 \pi  2^{N-1} (x-y)} - e^{-i 2 \pi  2^N (x-y)}}{2 \pi i (x-y)} dy \\
&=-\frac{2^{N-1} e^{i 2 \pi 2^{N-1}x}}{i 2 \pi }\int_{[0,2^{-(N-1)})} \frac{1}{x-y} dy + \frac{2^{N-1} e^{i 2 \pi 2^Nx}}{i 2 \pi }\int_{[0,2^{-(N-1)})} \frac{e^{-i 2 \pi 2^{N-1}y}}{x-y} dy \\
&+\frac{2^{N-1} e^{-i 2 \pi 2^{N-1}  x}}{i 2 \pi }\int_{[0,2^{-(N-1)})} \frac{e^{-i 2 \pi 2^N  y}}{x-y} dy - \frac{2^{N-1} e^{-i 2 \pi 2^N x}}{i 2 \pi }\int_{[0,2^{-(N-1)})} \frac{e^{-i 6 \pi   2^{N-1}y}}{x-y} dy \\
&= I_1^{(N)} (x) + I_2^{(N)} (x) + I_3^{(N)} (x) + I_4^{(N)} (x) .
\end{align*}
Note that for each $8 \cdot 2^{-(N-1)} \leq x \leq 1$ one has
$$  |  I_1^{(N)} (x)| = \frac{2^{N-1}}{2 \pi }  \int_{[0,2^{-(N-1)})} \frac{1}{x-y} dy \geq \frac{1} {2 \pi x}.  $$
We shall bound $|  I_2^{(N)} (x)|$, $|I_3^{(N)} (x)| $ and $| I_4^{(N)} (x) | $ from above. To bound $|  I_2^{(N)} (x)|$, we make use of the cancellation of $e^{-i 2 \pi 2^{N-1} y}$ over $[0, 2^{-(N-1)})$,
\begin{align*}
|  I_2^{(N)} (x)| &=  \frac{2^{N-1}}{2 \pi } \Big| \int_{[0,2^{-(N-1)})}  e^{-i 2 \pi 2^{N-1}y}\big( \frac{1}{x-y} - \frac{1}{x-2^{-1} \cdot 2^{-(N-1)}} \big) dy \Big| \\
&\leq\frac{2^{N-1}}{2 \pi }\int_{[0,2^{-(N-1)})} \Big| \frac{2^{-1} \cdot 2^{-(N-1)} -y}{(x-y) (x -2^{-1} \cdot 2^{-(N-1)})}\Big| dy \\
&\leq  \frac{2}{15 \pi x},
\end{align*}
since $x-y \geq x/2$ for all $y \in [0,2^{-(N-1)})$ and $x - 2^{-1} \cdot 2^{-(N-1)} \geq 15x/16$. Similarly, $|  I_3^{(N)} (x)| \leq 2/(15\pi x)$ and $|  I_2^{(N)} (x)| \leq 2/(15 \pi x)$ Therefore,
$$ |P_N (a_N) (x)| \geq |  I_1^{(N)} (x)| - |  I_2^{(N)} (x)| - | I_3^{(N)} (x) | - |  I_4^{(N)} (x)| \geq \frac{1} {10 \pi x}$$
for all $8 \cdot 2^{-(N-1)} \leq x \leq 1$ and hence,
$$  S_{\R^2}  (a_N \otimes a_N) (x,y) \geq | (P_N \otimes P_N) (a_N \otimes a_N) (x,y) | \geq \frac{1}{100 \pi^2 xy}$$
for $(x,y) \in [8 \cdot 2^{-(N-1)},1]^2$. It thus follows that
$$ \| S_{\R^2} (a_N \otimes a_N) \|_{L^{1, \infty} ([0,1]^2)} \gtrsim N.$$
Since $a_N \otimes a_N$ is a rectangle atom, by letting $N \rightarrow \infty$, one deduces that $S_{\R^2}$ does not locally map $H^1_{\mathrm{rect}} (\R \times \R)$ to $L^{1,\infty}(\R^2)$.
 \end{proof}

By adapting the proof of the previous proposition we obtain a corresponding negative result in the periodic setting.

\begin{proposition}
The two-parameter Littlewood-Paley square function $S_2$ does not map $H^1_{\mathrm{rect}} (\T \times \T)$ to $L^{1,\infty} (\T^2)$.
\end{proposition}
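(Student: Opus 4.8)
The plan is to adapt the proof of Proposition~\ref{negative} to the torus almost verbatim; the only genuine change is that the Euclidean kernel of $P_N$ gets replaced by the (modulated, truncated) Dirichlet-type kernel of $\Delta_N$, which has the same size on the range that matters. Identifying $\T$ with $[0,1)$, for $N\geq 5$ I take the very same function $a_N(x)=2^{N-1}e^{i2\pi 2^{N-1}x}\chi_{[0,2^{-(N-1)})}(x)$ as in the Euclidean argument. A one-line computation gives $\int_0^{2^{-(N-1)}}a_N=0$ (the modulation runs over a full period) and $\|a_N\|_{L^2(\T)}=|I_N|^{-1/2}$, where $I_N=[0,2^{-(N-1)})$; hence $a_N$ is a unimodular multiple of a one-dimensional dyadic atom on $\T$ and $a_N\otimes a_N$ is a rectangle atom associated to the dyadic rectangle $I_N\times I_N\subset\T^2$, so that $\|a_N\otimes a_N\|_{H^1_{\mathrm{rect}}(\T\times\T)}\lesssim 1$.

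Next I would establish the kernel estimate, which is the heart of the matter. The operator $\Delta_N$ has convolution kernel $D_N(u)=\sum_{n=2^{N-1}}^{2^N-1}e^{i2\pi nu}=(e^{i2\pi 2^N u}-e^{i2\pi 2^{N-1}u})/(e^{i2\pi u}-1)$, and since $1/(e^{i2\pi u}-1)=-\tfrac12-\tfrac i2\cot(\pi u)$, the same algebraic manipulation carried out for $P_N(a_N)$ --- now with only the positive-frequency piece present, hence producing two terms rather than four --- yields
\[
 \Delta_N(a_N)(x)=I_1^{(N)}(x)+I_2^{(N)}(x),\qquad I_1^{(N)}(x)=-2^{N-1}e^{i2\pi 2^{N-1}x}\!\int_0^{2^{-(N-1)}}\!\frac{dy}{e^{i2\pi(x-y)}-1},
\]
where $I_2^{(N)}(x)=2^{N-1}e^{i2\pi 2^N x}\int_0^{2^{-(N-1)}}e^{-i2\pi 2^{N-1}y}(e^{i2\pi(x-y)}-1)^{-1}\,dy$ is the exact torus counterpart of the error term $I_2^{(N)}$ appearing in the proof of Proposition~\ref{negative}. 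I would then fix the window $W_N=[\,8\cdot 2^{-(N-1)},1/4\,]$, on which $x-y\in(0,1/2)$ and stays bounded away from both singularities $u\in\Z$ of $D_N$, so that $|e^{i2\pi(x-y)}-1|\sim x$ and $\cot(\pi(x-y))\sim 1/x$ there. Expanding $1/(e^{i2\pi u}-1)$ as above and applying the mean value theorem to the resulting real integral, one finds $I_1^{(N)}(x)=e^{i2\pi 2^{N-1}x}\bigl(\tfrac12+\tfrac i2\cot(\pi\xi)\bigr)$ for some $\xi\in(x-2^{-(N-1)},x)$, whence $|I_1^{(N)}(x)|=\tfrac12|\csc(\pi\xi)|\geq\tfrac{1}{2\pi x}$ on $W_N$ --- exactly the lower bound obtained for the corresponding term in the Euclidean proof. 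For the error $I_2^{(N)}$ I would use, again exactly as there, the vanishing of $\int_0^{2^{-(N-1)}}e^{-i2\pi 2^{N-1}y}\,dy$ to subtract off the value of $y\mapsto(e^{i2\pi(x-y)}-1)^{-1}$ at the midpoint of $[0,2^{-(N-1)})$; since this function is Lipschitz with constant $\lesssim 1/x^2$ on $W_N$, this gives $|I_2^{(N)}(x)|\lesssim 2^{-(N-1)}/x^2$, which is much smaller than $1/x$ on $W_N$. Hence $|\Delta_N(a_N)(x)|\gtrsim 1/x$ for $x\in W_N$.

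With the kernel estimate in hand, I would conclude exactly as in the Euclidean case. For $(x,y)\in W_N\times W_N$,
\[
 S_2(a_N\otimes a_N)(x,y)\;\geq\;|(\Delta_N\otimes\Delta_N)(a_N\otimes a_N)(x,y)|\;=\;|\Delta_N(a_N)(x)|\,|\Delta_N(a_N)(y)|\;\gtrsim\;\frac{1}{xy},
\]
and therefore, estimating the weak-$L^1$ norm of $1/(xy)$ over $W_N\times W_N$ by means of (\ref{weak_sup}) precisely as in the proof of Proposition~\ref{negative}, one gets $\|S_2(a_N\otimes a_N)\|_{L^{1,\infty}(\T^2)}\gtrsim\log(2^{N-1})\sim N$. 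Since $\|a_N\otimes a_N\|_{H^1_{\mathrm{rect}}(\T\times\T)}\lesssim 1$, letting $N\to\infty$ shows that $S_2$ cannot be bounded from $H^1_{\mathrm{rect}}(\T\times\T)$ into $L^{1,\infty}(\T^2)$, which is the assertion; note that, in contrast with the Euclidean statement, no restriction to a compact set is needed since $\T^2$ has finite measure.

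I expect the main obstacle to be the kernel estimate of the second paragraph: one must track the constants carefully enough that the main term $I_1^{(N)}$ genuinely dominates the error $I_2^{(N)}$ uniformly on the fixed window, while correctly handling the two cosmetic differences between $D_N$ and the Euclidean kernel --- the harmless extra real part $-\tfrac12$, and the appearance of $\cot(\pi u)$ in place of $1/(\pi u)$, which is what forces the window to stay away from $u=1/2$ as well as from $u=0$. Everything else is routine and conceptually identical to Proposition~\ref{negative}.
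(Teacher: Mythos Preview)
Your proposal is correct and follows the same overall strategy as the paper: you use the identical test atom $a_N(x)=2^{N-1}e^{i2\pi 2^{N-1}x}\chi_{[0,2^{-(N-1)})}(x)$, establish the pointwise lower bound $|\Delta_N(a_N)(x)|\gtrsim 1/x$ on a fixed window, tensor, and conclude as in Proposition~\ref{negative}.

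The one place where your argument differs from the paper's is the kernel analysis. You work directly with $1/(e^{i2\pi u}-1)=-\tfrac12-\tfrac i2\cot(\pi u)$ and split $\Delta_N(a_N)=I_1^{(N)}+I_2^{(N)}$ according to which of the two exponentials in the numerator of $D_N$ is involved, bounding $I_1^{(N)}$ below via $|\csc|$ and $I_2^{(N)}$ above by a Lipschitz estimate. The paper instead decomposes the kernel itself as $D_N=\gamma_N+\beta_N$, where $\gamma_N(u)=(e^{i2\pi 2^N u}-e^{i2\pi 2^{N-1}u})/(i2\pi u)$ is exactly the (positive-frequency part of the) Euclidean kernel and $\beta_N$ is the difference; the point is that $u\mapsto (e^{i2\pi u}-1)^{-1}-(i2\pi u)^{-1}$ is \emph{bounded} on $[0,2^{-2}]$, so $\beta_N\ast a_N$ is uniformly $O(1)$, and the lower bound for $\gamma_N\ast a_N$ is read off verbatim from the Euclidean computation (with two terms instead of four, giving $11/(30\pi x)$). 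Both routes work; the paper's is slightly slicker in that it reduces the periodic estimate to the already-proved Euclidean one rather than redoing it, while yours is more self-contained. Your care in keeping the window away from $u=1/2$ (to control $\cot(\pi u)$) corresponds in the paper to the restriction $x\le 2^{-8}$ needed so that the bounded error $\pi e^{\pi/2}$ is beaten by $11/(30\pi x)$.
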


\begin{proof}
Let $N \geq 9$ be an integer to be chosen later. For $x \in [0,1)$ we decompose the kernel of $\Delta_N$ as
$$ \sum_{n=2^{N-1}}^{2^N -1} e^{i 2 \pi n x} = \frac{e^{i 2 \pi 2^N x} - e^{i 2 \pi 2^{N-1}x}}{e^{i 2 \pi x} -1} = \beta_N (x) + \gamma_N (x) ,   $$ 
where for $x \in (0,1)$ one has

$$ \beta_N (x) = (e^{i 2 \pi 2^N x} - e^{i 2 \pi 2^{N-1}x}) \big(  \frac{1}{e^{i 2 \pi x} -1} - \frac{1}{i2 \pi x} \big) \ \mathrm{and}\ \gamma_N (x) =  \frac{e^{i 2 \pi 2^N x} - e^{i 2 \pi 2^{N-1}x}}{i2 \pi x}$$
and $\beta_N (0) = 0$, $\gamma_N (0) = 2^{N-1}$. Define $a_N (x) = 2^{N-1} e^{i 2 \pi 2^{N-1}x} \chi_{[0,2^{-(N-1)})} (x)$ for $x \in [0,1)$. Arguing as in the proof of Proposition \ref{negative}, one shows that 
$$|\gamma_N \ast a_N (x)| \geq \frac{ 11}{30 \pi x}$$ 
for all $ 8 \cdot 2^{-(N-1)} \leq x  < 1  $. Using the series expansion of $e^{i 2 \pi x}$ and the fact that $\sin(2  \pi x ) \geq 4 x $ for every $ 0 \leq x\leq 2^{-2}$, one obtains $|\beta_N (x)| \leq \pi e^{\pi/2}$ for all $ 0 \leq x \leq 2^{-2}$. Since $\|a_N \|_{L^1 (\T)} =1$, it follows that $| \beta_N \ast a_N (x)| \leq \pi e^{ \pi/2}$ for every $2^{-(N-1)} \leq x \leq 2^{-2}$. Therefore, for each $ 8 \cdot 2^{-(N-1)} \leq x \leq 2^{-8}$ one has
$$ |\Delta_N (a_N) (x)| \geq |\gamma_N \ast a_N (x)| - |\beta_N \ast  a_N (x)| > \frac{11}{30 \pi x} -  \pi e^{ \pi/2} \geq \frac{11}{30 \pi x} - \frac{1}{16x} \sim \frac{1}{x}. $$
 Since we may regard $a_N \otimes a_N$ as an atom of $H^1_{\mathrm{rect} }(\T \times \T)$, by letting $N \rightarrow \infty$, we deduce that $S_2 $ does not map $H^1_{\mathrm{rect}} (\T \times \T)$ to $L^{1,\infty} (\T^2)$.
\end{proof}

\bibliographystyle{plainnat}
\bibliography{b}

\end{document}